\documentclass[11pt,a4paper]{article}

\usepackage[T1]{fontenc}
\usepackage[utf8]{inputenc}
\usepackage{lmodern}
\usepackage[margin=1in]{geometry}
\usepackage{amsmath,amssymb,amsthm,mathtools}
\usepackage{microtype}
\usepackage[hidelinks]{hyperref}
\hypersetup{pdftitle={Tameness of transitive Boolean functions},
  pdfsubject={Boolean functions, resampling dynamics and tameness}}

\newtheorem{theorem}{Theorem}[section]
\newtheorem{proposition}[theorem]{Proposition}
\newtheorem{lemma}[theorem]{Lemma}
\newtheorem{corollary}[theorem]{Corollary}
\theoremstyle{definition}
\newtheorem{definition}[theorem]{Definition}
\theoremstyle{remark}
\newtheorem{remark}[theorem]{Remark}

\newcommand{\E}{\mathbb E}
\newcommand{\PP}{\mathbb P}
\newcommand{\R}{\mathbb R}
\newcommand{\N}{\mathbb N}
\newcommand{\cN}{\mathcal N}
\newcommand{\cE}{\mathcal E}
\newcommand{\one}{\mathbf 1}
\DeclareMathOperator{\Var}{Var}
\newcommand{\ip}[2]{\langle #1,#2\rangle}
\newcommand{\norm}[1]{\lVert #1\rVert}

\title{Tameness of transitive Boolean functions}
\author{Xinyu Long \\ School of Mathematics \\ Shandong University, Jinan, Shandong, China \\ \texttt{longxinyu@sdu.edu.cn}}

\date{}

\begin{document}
\maketitle

\begin{abstract}
Let \(f_n:\{0,1\}^n\to\{0,1\}\) be Boolean functions, and let \(C_n\) count their changes in a unit time interval under stationary dynamics in which each coordinate is independently resampled at rate one from the Bernoulli\((p_n)\) distribution. We prove that, if \(f_n\) is transitive and \(n\min\{p_n,1-p_n\}^{r}\to\infty\) for every \(r>0\), then tightness of \((C_n)\) implies \(\Var(f_n)\to0\). This settles Forsström's conjecture in the \(p_n\le 1/2\) regime and extends it symmetrically to arbitrary biases.  The proof uses a mixed jump sum whose second moment is bounded by the generator of a low degree Fourier projection.
\end{abstract}

\noindent\textit{Keywords:} Boolean functions; tameness; transitivity; resampling dynamics; Fourier analysis.\par
\noindent\textit{MSC 2020:} 60J27; 60K99.

\section{Introduction}
\label{sec:introduction}
The behaviour of Boolean functions under random perturbations has been
extensively studied through noise sensitivity and noise stability, which
admit a Fourier-analytic
treatment~\cite{BenjaminiKalaiSchramm1999,SchrammSteif2010,GarbanPeteSchramm2010};
see~\cite{GarbanSteif2014,ODonnell2014} for general accounts. A natural
continuous time counterpart is obtained by independently resampling the
coordinates, a dynamics that also underlies dynamical
percolation~\cite{HaggstromPeresSteif1997}. This leads to the more
pathwise question of how often the Boolean output changes, and to the
notions of tameness and volatility studied by Jonasson and
Steif~\cite{JonassonSteif2016}. Tameness implies noise
stability~\cite[Proposition~1.13]{JonassonSteif2016}, but the converse
fails: majority at bias $1/2$ is noise stable and neither tame nor
volatile~\cite[Proposition~1.15 and the remark following its
proof]{JonassonSteif2016}, while Galicza~\cite{Galicza2020} constructed
increasing transitive sequences that are both noise stable and volatile.
Here volatility means that the number of output changes in a unit time
interval tends to infinity in probability. 

In this paper, we study when transitivity nevertheless rules out tame behaviour. Let $\cN\subseteq\N$ be infinite. For each
$n\in\cN$, let $X^{(n)}=(X_t^{(n)})_{t\ge0}$ be the stationary process on
$\{0,1\}^n$ in which each coordinate is independently resampled at rate one
from the Bernoulli$(p_n)$ distribution, where $p_n\in(0,1)$. Write
$\pi_n=\operatorname{Bernoulli}(p_n)^{\otimes n}$. Given
$f_n:\{0,1\}^n\to\{0,1\}$, set $Y_t^{(n)}=f_n(X_t^{(n)})$ and define
\begin{equation}\label{eq:count}
  C_n=\sum_{0<s<1}\one_{\{Y_s^{(n)}\ne Y_{s-}^{(n)}\}}.
\end{equation}
All limits are taken along $\cN$, and jump sums run over configuration jumps.

\begin{definition}[Jonasson and Steif~\cite{JonassonSteif2016}]\label{def:basic}
A sequence $(f_n)$ is
\emph{tame} if $(C_n)$ is tight. It is \emph{non-degenerate} if
$\liminf_{n\to\infty}\Var_{\pi_n}(f_n)>0$. A Boolean function $f_n$ is
\emph{transitive} if, for every $i,j\in[n]$, there is a permutation $\sigma$
of $[n]$ such that $\sigma(i)=j$ and $f_n(\sigma x)=f_n(x)$ for every $x$,
where $\sigma x=(x_{\sigma(k)})_{k\in[n]}$.
\end{definition}

Jonasson and Steif~\cite[Conjecture~1.21]{JonassonSteif2016}
conjectured that every non-degenerate transitive sequence with
$\mathbb E C_n\to\infty$ is not tame. Forsström~\cite{Forsstrom2020}
constructed counterexamples to this conjecture at polynomially small
biases, and conjectured instead that every non-degenerate transitive
sequence is not tame whenever $np_n^r\to\infty$ for every
$r>0$. We confirm this conjecture for $p_n\le1/2$, the following symmetric
formulation covers arbitrary biases.

\begin{theorem}\label{thm:main}
Let $(f_n)_{n\in\cN}$ be a sequence of transitive Boolean functions, and let
$b_n=\min\{p_n,1-p_n\}$. Suppose that, for every fixed $r>0$,
\begin{equation}\label{eq:bias-condition}
  nb_n^r\longrightarrow\infty.
\end{equation}
If $(f_n)$ is tame, then $\Var_{\pi_n}(f_n)\longrightarrow0.$ In particular, every non-degenerate transitive sequence satisfying
\eqref{eq:bias-condition} is not tame.
\end{theorem}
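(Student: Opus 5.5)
We argue by contradiction: along a subsequence, $(C_n)$ is tight and $\Var_{\pi_n}(f_n)\ge\delta>0$; we derive a contradiction. Work in the $p_n$-biased Fourier basis $\chi_S(x)=\prod_{i\in S}(x_i-p_n)/\sqrt{p_n(1-p_n)}$. The generator of the dynamics is $L=-\sum_i(I-E_i)$, where $E_i$ resamples coordinate $i$, so $L\chi_S=-|S|\chi_S$. Note that $\E C_n$ equals $2\sum_i p_n(1-p_n)\PP(\text{$i$ pivotal})$ up to normalisation, i.e.\ the total influence $I(f_n)=\sum_S|S|\hat f(S)^2$ measured in the biased basis. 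Tightness does not bound $\E C_n$, however; it only says $C_n$ is small with high probability. The aim is therefore a functional of the path that is controlled on the event $\{C_n\le K\}$ and whose second moment is computable by Fourier analysis.

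\emph{Step 1 (transitivity forces spread-out influence).} By a Talagrand/KKL-type inequality in the biased setting (Talagrand's inequality with the $\log(1/b_n)$ correction of Friedgut--Kalai type), a transitive $f_n$ with variance at least $\delta$ has all influences equal, $I_i=I/n$, and satisfies $I(f_n)\gtrsim \delta\, \log n / \log(1/b_n)$ in the appropriately normalised sense. Condition \eqref{eq:bias-condition} says exactly that $\log n/\log(1/b_n)\to\infty$. Hence the Fourier mass cannot concentrate on bounded levels without the function being trivial: more precisely, for every fixed $k$, the weight $\sum_{1\le|S|\le k}\hat f(S)^2$ must be small relative to $\delta$ in a controlled way (hypercontractivity with constant $b_n^{-k}$ against $n$). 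So much of the variance lives at levels $\ge k$.

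\emph{Step 2 (mixed jump sum).} Define $g=P_{\le k} f$ as the low-degree projection, and consider the martingale-type sum $M_n=\sum_{0<s<1}\bigl(g(X_s)-g(X_{s-})\bigr)\,h(Y_{s-},Y_s)$, mixing jumps of $g$ with whether $f$ changed. Its compensated second moment is $\E\int_0^1\Gamma(g,g)\,ds=-\ip{g}{Lg}=\sum_{|S|\le k}|S|\hat f(S)^2\le k\Var(f)$, which is bounded. Meanwhile, on $\{C_n\le K\}$ the process $Y$ is constant on at most $K+1$ intervals, so jumps of $g$ not accompanied by changes of $f$ are related to the covariance between $g$ and $f$ along the path. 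Since $\ip{f}{g}=\|g\|^2$, a path decomposition gives $\E[\text{(change of $g$ across stretches where $f$ is constant)}]$ comparable to $\|g\|^2$-type quantities. Choosing $k$ and $g$ via the spectral split, I expect to show that tightness forces $f$ to be close in $L^2$ to functions measurable with respect to few jumps. In turn this forces $\|P_{\le k}(f-\E f)\|^2\ge c\delta$ for some $k=k(\delta,K)$: a tame function is essentially low degree, since the high-degree part decorrelates in time $\ll1$ and would generate $\Theta$-many sign changes.

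\emph{Step 3 (contradiction).} Combining Step 2, where low-degree weight is at least $c\delta$ at bounded $k$, with Step 1, where transitivity and $nb_n^r\to\infty$ for all $r$ kill bounded-level weight, yields the contradiction. The main obstacle is Step 2: turning tightness of $C_n$ (a path statement, with no moment control) into a lower bound on low-degree Fourier weight. My first attempt would bound $\E[M_n^2\one_{C_n\le K}]$ from below by $K^{-1}\bigl(\E[\sum \Delta g\,\one]\bigr)^2$ via Cauchy--Schwarz over the at most $K+1$ constancy intervals, and from above by the generator bound $\sum_{|S|\le k}|S|\hat f(S)^2$.
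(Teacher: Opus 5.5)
\textbf{Step 1 is false, and the contradiction in Step 3 rests on it.} Transitivity together with $nb_n^r\to\infty$ does not make bounded-level Fourier weight small. Majority on an odd number of bits at $p_n=1/2$ is transitive with $W_{n,1}\to1/(2\pi)$. Since it is noise stable, $\sum_{k\le d}W_{n,k}$ is close to the full variance $1/4$ for large fixed $d$.

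A biased KKL/Talagrand bound only forces the total influence $\sum_k kW_{n,k}$, which is $\E C_n/2$, to diverge. That is compatible with almost all weight sitting on bounded levels, and, as you note yourself, it says nothing about tightness.

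What transitivity plus hypercontractivity actually controls is not the size of the low-degree part $h_{n,d}$ but the size of its \emph{increments}. One has $J_{n,i}^{(d)}\le d/n$ for every $i$. The $L^2\to L^4$ bound applied to $D_ih_{n,d}$ (degree at most $d-1$) then gives $\E\sum_{0<s<1}|\Delta h_{n,d}(X_s)|^4\le 2d^2/(nv_n^{2d-1})\to0$. So the largest jump of $h_{n,d}$ on $(0,1)$ tends to zero in probability.

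\textbf{The missing idea} is that bounded-level weight is killed only by combining these small jumps \emph{with tameness}, and this is where a mixed jump sum is needed. The paper uses $Q_{n,d}=\sum_{0<s<1}\Delta f_n(X_s)\Delta h_{n,d}(X_s)$, which has three properties:
\begin{itemize}
\item its mean is $2\cE_n(h_{n,d},h_{n,d})=2\sum_{k\le d}kW_{n,k}$;
\item its second moment is bounded uniformly in $n$, via compensation for the pre-jump part and time reversal for the post-jump part, using $\cE_n(h_{n,d},h_{n,d})\le d$ and $\norm{L_nh_{n,d}}_{2,\pi_n}\le d$;
\item pathwise, $|Q_{n,d}|\le C_n\max_s|\Delta h_{n,d}(X_s)|$, since $f_n$ is Boolean.
\end{itemize}
Tightness then gives $Q_{n,d}\to0$ in probability. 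Uniform integrability gives $\E Q_{n,d}\to0$, hence $\sum_{k\le d}W_{n,k}\to0$.

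\textbf{Your Step 2 is aimed at the easy direction.} That tameness puts almost all variance on bounded levels follows in a few lines from $\PP(Y_t\ne Y_0)=2\sum_k(1-e^{-kt})W_{n,k}$ and tightness, with no jump-sum machinery. Your sketched Cauchy--Schwarz lower bound over constancy intervals is also not a well-defined argument as written.

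So your two steps are the wrong way round. The true ingredients are ``tame $\Rightarrow$ high-degree tail small'' and ``tame $+$ transitive $\Rightarrow$ low-degree weight small.'' The second genuinely needs both hypotheses; neither transitivity nor tameness alone suffices, as majority shows for transitivity.
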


Condition~\eqref{eq:bias-condition} is equivalent to
$\log(1/b_n)=o(\log n)$. It includes biases bounded away from zero and one,
as well as $p_n=(\log n)^{-a}$ for any fixed $a>0$. 

\begin{remark}\label{rem:endpoints}
The use of $b_n$ reflects the symmetry $x\mapsto1-x$. A condition on $p_n$
alone is insufficient near one: for $n\ge2$, take $p_n=1-1/n$ and
$f_n(x)=\prod_{i=1}^n x_i$. Then $\E_{\pi_n}f_n=p_n^n\longrightarrow e^{-1}.$ Thus
\[
  \E C_n=2n(1-p_n)p_n^n\le2.
\]

Hence the sequence is tame, while
\[
\operatorname{Var}_{\pi_n}(f_n)
=p_n^n(1-p_n^n)\to e^{-1}(1-e^{-1})>0,
\]
so it is also non-degenerate.
\end{remark}

For $X\sim\pi_n$, let $R_iX$ be obtained by independently resampling
its $i$-th coordinate, and define the resampling influences by
\begin{equation}\label{eq:influence-definition}
  I_n(f_n)=\sum_{i=1}^n I_{n,i}(f_n),
\end{equation}
where $I_{n,i}(f_n)
  =\PP\bigl(f_n(X)\ne f_n(R_iX)\bigr).$  Influences play a central role in the theory of Boolean functions,
notably in junta approximation~\cite{Friedgut1998} and sharp-threshold
theory~\cite{FriedgutKalai1996}.

Thus, we have $\E C_n=I_n(f_n)$ by~\cite[Proposition~1.19]{JonassonSteif2016}. At bias $1/2$, the
Kahn--Kalai--Linial theorem~\cite{KahnKalaiLinial1988} makes $\E C_n$ diverge for every non-degenerate transitive sequence.
Tightness, however, neither rules out a
diverging first moment nor gives a uniform bound on the total influence.
Our proof therefore uses a different quantity, an auxiliary mixed jump
sum. Let $h_{n,d}$ be the orthogonal projection of
$f_n-\E_{\pi_n}f_n$ onto Fourier degrees $1,\ldots,d$ in
$L^2(\pi_n)$, and set
\[
  Q_{n,d}
  =\sum_{0<s<1}
    \Delta f_n(X_s^{(n)})\,
    \Delta h_{n,d}(X_s^{(n)}),
\]
where $\Delta g(X_s^{(n)})
  =g(X_s^{(n)})-g(X_{s-}^{(n)}).$

For each fixed $d$, transitivity and an $L^2\to L^4$
hypercontractive estimate~\cite{Bonami1970,Beckner1975,ODonnell2014}
imply, under~\eqref{eq:bias-condition}, that the maximum jump of
$h_{n,d}(X_t^{(n)})$ on $(0,1)$ tends to zero in probability. Booleanity gives the
pathwise bound
\[
  |Q_{n,d}|
  \le C_n\max_{0<s<1}
       |\Delta h_{n,d}(X_s^{(n)})|,
\]
while reversibility and martingale compensation yield
\[
  \sup_n \E Q_{n,d}^{\,2}<\infty.
\]
Thus tameness gives $Q_{n,d}\to0$ in probability and hence
$\E Q_{n,d}\to0$. Moreover, $\E Q_{n,d}$ equals twice the Dirichlet
energy of $h_{n,d}$, so the Fourier mass in every fixed range of positive
degrees vanishes. Tameness also bounds the high-degree Fourier
tail~\cite{JonassonSteif2016}, forcing the variance to vanish.

Our argument also yields an extension for regular functions.  A Boolean function is \emph{regular} if its
coordinate influences are all equal; every transitive Boolean function
is regular. Forsstr\"om~\cite[Theorem~1.4]{Forsstrom2021} proved
non-tameness for non-degenerate increasing regular sequences when the
biases are bounded away from zero and one. The following result removes
the increasing assumption.

\begin{corollary}\label{cor:regular}
Suppose that $\epsilon\le p_n\le1-\epsilon$ for all sufficiently large
$n$, for some $\epsilon>0$, and that each $f_n$ is regular. If $(f_n)$
is tame, then
\[
  \Var_{\pi_n}(f_n)\longrightarrow0.
\]
\end{corollary}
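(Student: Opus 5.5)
We will run the argument sketched for Theorem~\ref{thm:main}, with regularity used in place of transitivity at the one point where symmetry enters. That point is the claim that, for each fixed $d$, the maximal jump of $h_{n,d}(X^{(n)}_t)$ on $(0,1)$ tends to zero in probability. All other ingredients carry over unchanged. These are the pathwise bound $|Q_{n,d}|\le C_n\max_s|\Delta h_{n,d}(X_s^{(n)})|$ (Booleanity), the uniform second moment bound $\sup_n\E Q_{n,d}^2<\infty$ (reversibility and martingale compensation), the identity $\E Q_{n,d}=2\mathcal E(h_{n,d})$, and the high-degree tail bound from tameness~\cite{JonassonSteif2016}. None of them uses symmetry. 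Since $b_n\ge\epsilon$, condition~\eqref{eq:bias-condition} holds trivially, and the hypercontractive constants for degree $d$ depend only on $d$ and $\epsilon$.

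\emph{Key step: small jumps under regularity.} A jump of $h=h_{n,d}$ at coordinate $i$ has size $|D_ih(x)|$, where $D_ih$ is the discrete derivative $h(x^{i\to1})-h(x^{i\to0})$. We bound the maximal jump by a union bound over coordinates and resampling times. The number of resamplings of coordinate $i$ in $(0,1)$ is Poisson$(1)$, and at each resampling the configuration is distributed as $\pi_n$. Hence
\[
\PP\Bigl(\max_s|\Delta h(X_s)|>\eta\Bigr)\le\sum_{i=1}^n\PP_{\pi_n}\bigl(|D_ih|>\eta\bigr)\le\eta^{-4}\sum_i\E|D_ih|^4 .
\]
The function $D_ih$ has degree at most $d-1$, so the $L^2\to L^4$ hypercontractive estimate gives $\E|D_ih|^4\le K(d,\epsilon)\,(\E|D_ih|^2)^2$. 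Writing $\E|D_ih|^2=c(\epsilon)\,\mathrm{Inf}_i^{\le d}(f_n)$, the right-hand side is at most
\[
K'\max_i\mathrm{Inf}_i^{\le d}(f_n)\cdot\sum_i\mathrm{Inf}_i^{\le d}(f_n)\le K'\,d\,\max_i\mathrm{Inf}_i^{\le d}(f_n).
\]
It therefore suffices that $\max_i\mathrm{Inf}^{\le d}_i(f_n)\to0$.

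\emph{Main obstacle: equal total influences versus low-degree influences.} Regularity only gives equality of the total influences $I_{n,i}(f_n)$, not of the degree-truncated influences $\mathrm{Inf}_i^{\le d}$. However, the truncated influence is dominated by the total one, $\mathrm{Inf}_i^{\le d}\le\mathrm{Inf}_i\asymp_\epsilon I_{n,i}(f_n)$. So it suffices to show $\max_iI_{n,i}(f_n)=I_n(f_n)/n\to0$.

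Tameness does not bound $I_n(f_n)=\E C_n$ directly. We therefore argue by contradiction along a subsequence on which $I_{n,i}\ge\delta$ for all $i$. Then $I_n\ge\delta n$, and by Poisson thinning $C_n$ should be large. Concretely, we would compare $C_n$ with the number of pivotal resamplings, which concentrate near $\delta n$ by a second moment argument on short time windows. If this direct route proves delicate, we instead use the standard fact that for Boolean $f$ and $p\in[\epsilon,1-\epsilon]$ one has $\mathrm{Inf}_i(f)\le C\,\Var(f)$ for any coordinate. Combined with $\sum_i\mathrm{Inf}_i^{\le d}\le d\Var f$, regularity then forces each $\mathrm{Inf}_i^{\le d}$ to be compared against the common total influence $I_n/n$.

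The cleanest route, and the one we commit to, avoids comparing $\mathrm{Inf}^{\le d}_i$ across coordinates altogether. We apply the union bound above only to the pieces it needs: $\max_i\mathrm{Inf}_i^{\le d}\le\max_i\mathrm{Inf}_i=\min_i\mathrm{Inf}_i\le\frac1n\sum_i\mathrm{Inf}_i$. We then control the last quantity by truncating. For the high-degree part we use the Jonasson--Steif tail bound; for the degree-$\le d$ part we use $\sum_i\mathrm{Inf}_i^{\le d}\le d$. The remaining difficulty is that $\sum_i\mathrm{Inf}_i$ may be of order $n$. Ruling this out via the tameness of $C_n$ is the step we expect to require the most care.

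Once $\max_s|\Delta h_{n,d}|\to0$ in probability, tightness of $C_n$ gives $Q_{n,d}\to0$ in probability. Uniform integrability, from the bounded second moments, then gives $\E Q_{n,d}\to0$, so the Fourier mass in degrees $1,\dots,d$ vanishes. Together with the tail bound, this yields $\Var_{\pi_n}(f_n)\to0$.
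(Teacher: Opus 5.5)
\textbf{The gap: the step $I_n(f_n)/n\to0$ is left unproved.} Your reduction matches the paper's. Regularity gives $\max_i J_{n,i}^{(d)}\le\max_i\sum_{S\ni i}\widehat f_n(S)^2=I_n(f_n)/(2n)$, hence $\sum_i (J_{n,i}^{(d)})^2\le d\,I_n(f_n)/(2n)$. Since $v_n$ is bounded below, everything then hinges on $I_n(f_n)/n\to0$, and that is exactly the step you do not prove. You explicitly leave ``ruling out $\sum_i\mathrm{Inf}_i$ of order $n$ via the tameness of $C_n$'' as the remaining difficulty. Neither fallback closes it:
\begin{itemize}
\item The comparison of $C_n$ with pivotal resamplings via a second-moment argument is only sketched; no bound on the correlations of pivotal events is given.
\item The bound $\mathrm{Inf}_i(f_n)\le\Var(f_n)$ is true but gives boundedness, not decay.
\end{itemize}
So, as written, the proposal does not establish the corollary.

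\textbf{The missing idea.} It is elementary and already latent in your truncation. Every $S\subseteq[n]$ has $|S|\le n$, i.e.\ $W_{n,k}=0$ for $k>n$, so
\[
\frac1n\sum_{i=1}^n\sum_{S\ni i}\widehat f_n(S)^2=\frac1n\sum_{k\ge1}kW_{n,k}\le\frac dn+\sum_{k>d}W_{n,k}.
\]
The tail bound controls $\sum_{k>d}W_{n,k}$, not $\sum_{k>d}kW_{n,k}$. The trivial bound $k\le n$ bridges the two once you normalize by $n$. Lemma~\ref{lem:spectral-tail} (take $\limsup_n$ first, then $d\to\infty$) therefore gives $I_n(f_n)/n\to0$ with no further appeal to $C_n$. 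With this line inserted, your argument becomes the paper's proof: \eqref{eq:criterion} holds for every fixed $d$, and Theorem~\ref{thm:criterion} yields $\Var_{\pi_n}(f_n)\to0$.
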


Section~\ref{sec:fourier} introduces the Fourier preliminaries and the spectral
consequence of tameness. Section~\ref{sec:jumps} proves the jump estimates,
and Section~\ref{sec:proof} proves the main theorem and
Corollary~\ref{cor:regular}.

\section{Preliminaries}
\label{sec:fourier}

Write \(v_n=p_n(1-p_n)\). For $S\subseteq[n],$ we use the biased Fourier basis
\[
 \chi_{n,S}(x)=\prod_{i\in S}\frac{x_i-p_n}{\sqrt{v_n}},
\]
with $\chi_{n,\varnothing}=1$. The family \((\chi_{n,S})_{S\subseteq[n]}\) forms an orthonormal basis in \(L^2(\pi_n)\), so
\[
 f_n=\sum_{S\subseteq[n]}\widehat f_n(S)\chi_{n,S},
\]
where $\widehat f_n(S)=\E_{\pi_n}[f_n\chi_{n,S}].$ We refer to~\cite{ODonnell2014} for background on Fourier analysis on product spaces. Let $W_{n,k}=\sum_{|S|=k}\widehat f_n(S)^2,$ $k\ge1$ with \(W_{n,k}=0\) for \(k>n\). Parseval's identity gives
\begin{equation}\label{eq:variance-spectrum}
 \Var_{\pi_n}(f_n)=\sum_{k\ge1}W_{n,k}\le\frac14.
\end{equation}

For \(a\in\{0,1\}\), let \(x^{i\to a}\) denote the configuration obtained from \(x\) by setting its \(i\)-th coordinate to \(a\). The generator of \(X^{(n)}\) is
\[
(L_ng)(x)
=
\sum_{i=1}^n
\Bigl[(1-p_n)g(x^{i\to0})+p_ng(x^{i\to1})-g(x)\Bigr],
\]
and the associated semigroup is \(e^{tL_n}\).

The dynamics is reversible with respect to \(\pi_n\). The Fourier characters
diagonalize \(L_n\):
\begin{equation}\label{eq:eigenfunctions}
    L_n\chi_{n,S}=-|S|\chi_{n,S},
    \quad \text{and} \quad
    e^{tL_n}\chi_{n,S}=e^{-t|S|}\chi_{n,S}.
\end{equation}
The associated Dirichlet form is $\cE_n(g,h)=-\ip{g}{L_nh}_{\pi_n}.$


For each integer \(d\ge1\), define a test function that retains low-order Fourier information
\begin{equation}\label{eq:projection}
 h_{n,d}=\sum_{1\le|S|\le d}\widehat f_n(S)\chi_{n,S},
\end{equation}
while $J_{n,i}^{(d)}=\sum_{\substack{S\ni i\\1\le|S|\le d}}\widehat f_n(S)^2$ is the Fourier energy of $h_{n,d}$ in the \(i\)-th coordinate.

Orthogonality yields
\begin{align}
 \cE_n(f_n,h_{n,d})=\cE_n(h_{n,d},h_{n,d})
   =\sum_{k=1}^d kW_{n,k}\le d,\label{eq:projection-energy}
\end{align}
and
\begin{equation}
\norm{L_nh_{n,d}}_{2,\pi_n}^2=\sum_{k=1}^d k^2W_{n,k}\le d^2.\label{eq:projection-generator}
\end{equation}

Summing over \(i\), each squared Fourier coefficient \(\widehat f_n(S)^2\)
with \(1\le |S|\le d\) is counted exactly \(|S|\) times. Hence
\begin{equation}\label{eq:sum-J}
    \sum_{i=1}^n J_{n,i}^{(d)}
    =\sum_{k=1}^d kW_{n,k}
    \le d.
\end{equation}

\begin{lemma}\label{lem:spectral-tail}
If \((f_n)\) is tame, then
\begin{equation}\label{eq:spectral-tail}
    \lim_{d\to\infty}\limsup_{n\to\infty}
    \sum_{k>d}W_{n,k}=0.
\end{equation}
\end{lemma}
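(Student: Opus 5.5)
The idea is to use the stationary autocorrelation of \(Y^{(n)}\) to link tightness of \(C_n\) to the spectrum, following the approach of Jonasson and Steif. By reversibility and \eqref{eq:eigenfunctions}, for every \(t\ge0\),
\[
  \E\bigl[(Y_t^{(n)}-Y_0^{(n)})^2\bigr]
  =2\ip{f_n}{f_n-e^{tL_n}f_n}_{\pi_n}
  =2\sum_{k\ge1}(1-e^{-tk})W_{n,k}.
\]
Since \(f_n\) is Boolean, the left side equals \(\PP(Y_t^{(n)}\ne Y_0^{(n)})\). This event forces at least one change of \(Y^{(n)}\) in \((0,t]\). By stationarity, the probability of a change in \((0,t]\) equals the probability of a change in any fixed subinterval of \((0,1)\) of length \(t\).

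The central step is to show that tightness makes this probability uniformly small for small \(t\):
\[
  \lim_{t\downarrow0}\limsup_{n\to\infty}\PP\bigl(Y^{(n)}\text{ changes in }(0,t]\bigr)=0.
\]
Take \(t=1/m\) and partition \((0,1)\) into the \(m\) intervals \(((j-1)/m,j/m)\); the boundary points have probability zero of being jump times. Let \(A_j\) be the event that \(Y^{(n)}\) changes in the \(j\)-th interval, so each \(A_j\) has probability \(q_{n,m}=\PP(A_1)\) and \(\sum_j\one_{A_j}\le C_n\). Stationarity alone only gives \(m\,q_{n,m}=\E\sum_j\one_{A_j}\), which need not be bounded. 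Tightness controls \(\sum_j\one_{A_j}\) only in probability, so I need a genuine argument at this point.

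My plan is to use a second-moment (Paley--Zygmund) argument, exploiting the fact that the \(A_j\) depend on disjoint time windows. Decorrelation, however, fails for the slow Fourier modes. I would instead argue by contradiction, along the lines of Jonasson and Steif's proof that tameness implies noise stability. Suppose \(\sum_{k>d}W_{n,k}\ge\delta\) along a subsequence with \(d\to\infty\). Then \(\PP(Y_{j/m}\ne Y_{(j-1)/m})\ge c\delta\) for a suitable \(m=m(d)\to\infty\), using \(1-e^{-k/m}\ge1-e^{-1}\) for \(k\ge d\ge m\).

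Next I would lower bound \(N_m=\sum_j\one\{Y_{j/m}\ne Y_{(j-1)/m}\}\le C_n\) in probability. The mean of \(N_m\) is at least \(c\delta m\). For the second moment, write \(\one\{Y_a\ne Y_b\}=(Y_a-Y_b)^2\) and expand. The covariances are governed by the semigroup: the high-frequency part decays like \(e^{-d\cdot\text{gap}}\), while the low-frequency part contributes \(O(\E N_m)\) after summation. This would give \(\E N_m^2\le C(\E N_m)^2+C\,\E N_m\), so Paley--Zygmund yields \(\PP(N_m\ge c'\delta m)\ge c''>0\) with \(m\to\infty\), contradicting tightness of \(C_n\).

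I expect this covariance computation for \(N_m\) to be the main obstacle, since it requires fourth-order correlations of a Boolean process. If it proves too delicate, my fallback is to cite the corresponding result of Jonasson and Steif directly, namely that tameness implies the spectral tail condition \eqref{eq:spectral-tail} uniformly in the bias. Their argument is pathwise via stationarity and does not depend on \(p_n\).
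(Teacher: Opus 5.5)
You correctly derive the spectral identity for $\E\bigl[(Y_t^{(n)}-Y_0^{(n)})^2\bigr]$ and correctly reduce the lemma to $\lim_{t\downarrow0}\limsup_n\PP(Y_t^{(n)}\ne Y_0^{(n)})=0$. However, the proposal never proves this central step. The Paley--Zygmund route depends on a covariance bound for $N_m$ that you only sketch heuristically. That heuristic is not justified as stated: the joint law of $\one\{Y_a\ne Y_b\}$ and $\one\{Y_c\ne Y_d\}$ involves functions such as $f_n\,e^{sL_n}f_n$, whose spectra are not controlled by the $W_{n,k}$. The fallback only outsources the step to a citation.

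The missing idea is elementary, and your diagnosis goes wrong where you say stationarity ``only gives'' $m\,q_{n,m}=\E\sum_j\one_{A_j}$, ``which need not be bounded.'' You do not need this expectation to be bounded, only to be $o(m)$. That follows from the deterministic bound $\sum_{j=1}^m\one_{A_j}\le\min\{C_n,m\}$. For every $K$,
\[
 m\,q_{n,m}=\E\sum_{j=1}^m\one_{A_j}\le K+m\,\PP(C_n>K),
 \qquad\text{so}\qquad
 q_{n,m}\le\frac{K}{m}+\PP(C_n>K).
\]
Take $\limsup_n$, then let $m\to\infty$, then $K\to\infty$; tightness gives the central step. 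Your spectral identity with $t=1/m$ and $m\le d$ then finishes the lemma. This is exactly the paper's argument in discrete form. The paper's version is continuous: on $\{C_n\le K\}$, the set of $s\in[0,1-t]$ with $Y_{s+t}^{(n)}\ne Y_s^{(n)}$ has Lebesgue measure at most $Kt$, while it always has measure at most $1-t$.

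The same observation removes your ``main obstacle'' in the contradiction framing. Since $N_m\le m$ and $\E N_m\ge c\delta m$, you get $\E N_m^2\le m\,\E N_m\le(\E N_m)^2/(c\delta)$ for free. Even more directly, $\E N_m\le c\delta m/2+m\,\PP(N_m\ge c\delta m/2)$ gives $\PP(C_n\ge c\delta m/2)\ge c\delta/2$. This contradicts tightness with no fourth-order correlation computation at all.
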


\begin{proof}
Fix \(0<t<1\). On any output path with at most \(K\) changes, the set of
times \(s\in[0,1-t]\) for which $Y_{s+t}^{(n)}\ne Y_s^{(n)}$ has Lebesgue measure at most \(Kt\), since each change can lie in
\((s,s+t]\) for a set of starting times of length at most \(t\).
By stationarity,
\[
    (1-t)\PP\bigl(Y_t^{(n)}\ne Y_0^{(n)}\bigr)
    \le Kt+(1-t)\PP(C_n>K).
\]
Hence
\[
    \lim_{t\downarrow0}\limsup_{n\to\infty}
    \PP\bigl(Y_t^{(n)}\ne Y_0^{(n)}\bigr)=0,
\]
by first letting \(t\downarrow0\) and then \(K\to\infty\), using the
tightness of \((C_n)\).

Since \(Y_t^{(n)}\in\{0,1\}\),
\[
    \PP\bigl(Y_t^{(n)}\ne Y_0^{(n)}\bigr)
    =
    \E\bigl(Y_t^{(n)}-Y_0^{(n)}\bigr)^2.
\]
By stationarity and \eqref{eq:eigenfunctions},
\[
    \PP\bigl(Y_t^{(n)}\ne Y_0^{(n)}\bigr)
    =
    2\sum_{k\ge1}(1-e^{-kt})W_{n,k}.
\]
Taking \(t=1/(d+1)\), we obtain
\[
    \sum_{k>d}W_{n,k}
    \le
    \frac{
        \PP\bigl(
            Y_{1/(d+1)}^{(n)}\ne Y_0^{(n)}
        \bigr)
    }{2(1-e^{-1})}.
\]
Taking \(\limsup_{n\to\infty}\) and then \(d\to\infty\) proves the lemma.
\end{proof}

Lemma~\ref{lem:spectral-tail} is the spectral form of the implication from tameness to noise stability proved in~\cite{JonassonSteif2016}. 

\section{Jump estimates}
\label{sec:jumps}

\subsection{A mixed jump sum}

We first work with a stationary continuous time Markov chain \(X\) on a finite state space \(\mathsf E\). Denote its off-diagonal rates by \(r(x,y)\), with \(r(x,x)=0\), its generator by \(L\), and its stationary law by \(\pi\). For a function \(h\), write \(\Delta h(X_s)=h(X_s)-h(X_{s-})\).

We will use the standard compensation identities for jump processes. If \(G(x,x)=0\), set
\[
 S_t^G=\sum_{0<s\le t}G(X_{s-},X_s).
\]
Then \(M_t^G=S_t^G-\int_0^t a_G(X_s)\,ds\) is a square-integrable martingale, where $a_G(x)=\sum_y r(x,y)G(x,y)$, and, in stationarity,
\begin{align}
 \E S_t^G&=t\sum_{x,y}\pi(x)r(x,y)G(x,y),\label{eq:compensation-first}\\
 \E(M_t^G)^2&=t\sum_{x,y}\pi(x)r(x,y)G(x,y)^2.\label{eq:compensation-second}
\end{align}

Suppose now that \(\pi(x)>0\) for every \(x\) and that the chain is reversible, so \(\pi(x)r(x,y)=\pi(y)r(y,x)\). Its Dirichlet form satisfies
\begin{equation}\label{eq:dirichlet-edges}
 \cE(g,h)=-\ip{g}{Lh}_\pi
 =\frac12\sum_{x,y}\pi(x)r(x,y)(g(y)-g(x))(h(y)-h(x)).
\end{equation}

\begin{lemma}\label{lem:mixed-jumps}
Let \(X\) be stationary and reversible, and let \(f,h:\mathsf E\to\R\) satisfy \(\norm{f}_\infty\le1\). For
\[
 Q(f,h)=\sum_{0<s<1}\Delta f(X_s)\Delta h(X_s),
\]
we have
\begin{equation}\label{eq:mixed-mean}
 \E Q(f,h)=2\cE(f,h)
\end{equation}
and
\begin{equation}\label{eq:mixed-L2}
 \bigl(\mathbb{E}[Q(f,h)^2]\bigr)^{1/2}
 \le 2\bigl(\sqrt{2\cE(h,h)}+\norm{Lh}_{2,\pi}\bigr).
\end{equation}
\end{lemma}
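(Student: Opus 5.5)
The idea is to write \(Q(f,h)\) as a jump sum \(S_1^G\) with \(G(x,y)=(f(y)-f(x))(h(y)-h(x))\), which vanishes on the diagonal. The mean identity \eqref{eq:mixed-mean} then follows at once from \eqref{eq:compensation-first} with \(t=1\) and the edge representation \eqref{eq:dirichlet-edges}:
\[
\E Q(f,h)=\sum_{x,y}\pi(x)r(x,y)(f(y)-f(x))(h(y)-h(x))=2\cE(f,h).
\]
(Excluding the time \(s=1\) is harmless, since a jump occurs there with probability zero.)

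For the second moment we cannot use \(M^G\) directly. The compensator \(\int_0^1 a_G(X_s)\,ds\) involves \(a_G(x)=\sum_y r(x,y)(f(y)-f(x))(h(y)-h(x))\), which is a carré du champ \(\Gamma(f,h)\) that we cannot control in \(L^2\) through \(\cE(h,h)\) alone. Instead, the plan is to exploit the Booleanity-type bound \(\norm f_\infty\le1\) through the pathwise identity
\[
\Delta f\,\Delta h=\Delta(fh)-f(X_{s-})\Delta h-h(X_{s-})\Delta f .
\]
This is still awkward, because \(h(X_{s-})\Delta f\) is not controlled by \(h\)-quantities.

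The cleaner route is to decompose \(h(X_t)\) itself into a martingale plus a drift: \(h(X_t)=h(X_0)+N_t+\int_0^t Lh(X_s)\,ds\), where \(N\) is the pure-jump martingale with jumps \(\Delta h\). Then \(Q=\int_0^1 \Delta f(X_s)\,dN_s\) exactly, because the drift part is continuous and contributes no jumps. Writing \(\Delta f(X_s)=f(X_s)-f(X_{s-})\), we get
\[
Q=\int_0^1 f(X_s)\,dh(X_s)-\int_0^1 f(X_{s-})\,dN_s-\int_0^1 f(X_s)Lh(X_s)\,ds ,
\]
interpreting the first term pathwise as the Stieltjes sum over jumps plus the drift integral. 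We have \(\E\bigl(\int f(X_{s-})\,dN_s\bigr)^2\le \E[N]_1=\sum\pi r(\Delta h)^2=2\cE(h,h)\) by \eqref{eq:compensation-second}. The drift term has \(L^2\) norm at most \(\norm{Lh}_{2,\pi}\) by Cauchy–Schwarz and stationarity. The remaining term \(\int f(X_s)\,dh(X_s)\) is the troublesome one. I expect this to be the main obstacle, and I would handle it by time reversal.

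Under reversibility, the reversed process \(\tilde X_s=X_{(1-s)-}\) is again a stationary copy of the chain. Under reversal, \(\sum_s f(X_s)\Delta h(X_s)\) becomes \(-\sum_s f(\tilde X_{s-})\Delta h(\tilde X_s)\), so its jump part is again a predictable integral against the martingale of \(h\) under \(\tilde X\), plus a drift \(\int f\,Lh\). Each piece is bounded as before: the martingale part by \(\sqrt{2\cE(h,h)}\) and the drift by \(\norm{Lh}_{2,\pi}\). Summing the four pieces with Minkowski's inequality gives the bound \(2(\sqrt{2\cE(h,h)}+\norm{Lh}_{2,\pi})\). The care needed is in checking that the two martingale terms and the two drift terms each appear once, which yields exactly the factor \(2\) in \eqref{eq:mixed-L2}.
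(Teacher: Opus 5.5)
Your proposal is correct and follows the paper's proof. You split $Q=B-A$ with $B=\sum_{0<s<1} f(X_s)\Delta h(X_s)$ and $A=\sum_{0<s<1} f(X_{s-})\Delta h(X_s)$. You bound $A$ by its martingale part (at most $\sqrt{2\cE(h,h)}$ in $L^2$, using $\norm{f}_\infty\le1$) plus the drift $\int_0^1 f(X_s)Lh(X_s)\,ds$ (at most $\norm{Lh}_{2,\pi}$). You then use reversibility to see that $B$ is $-A$ evaluated on the reversed stationary chain, which is exactly the paper's statement that $B$ has the law of $-A$.

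One small wording fix: since $t\mapsto h(X_t)$ is piecewise constant, the term $\int_0^1 f(X_s)\,dh(X_s)$ is just the jump sum $B$, with no drift integral added. Your later treatment of that term correctly assumes this.
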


\begin{proof}
The expectation follows directly from \eqref{eq:compensation-first} and \eqref{eq:dirichlet-edges}. To bound the second moment, note that 
\[
\begin{aligned}
Q(f,h)
&=\sum_{0<s<1}
\Delta f(X_s)\Delta h(X_s)\\
&=\sum_{0<s<1}
\bigl(f(X_s)-f(X_{s-})\bigr)\Delta h(X_s)\\
&=\sum_{0<s<1}f(X_s)\Delta h(X_s)-\sum_{0<s<1}f(X_{s-})\Delta h(X_s)\\
&=:B-A.
\end{aligned}
\] 

Apply compensation to \(G(x,y)=f(x)(h(y)-h(x))\). Its compensator is \(fLh\), and \eqref{eq:compensation-second} gives
\[
 \E(M_1^G)^2
 \le\sum_{x,y}\pi(x)r(x,y)(h(y)-h(x))^2
 =2\cE(h,h).
\]
Also, by Cauchy--Schwarz and stationarity,
\[
 \E\left|\int_0^1 f(X_s)Lh(X_s)\,ds\right|^2
 \le\int_0^1\E|Lh(X_s)|^2\,ds=\norm{Lh}_{2,\pi}^2.
\]
Consequently,
\begin{equation}\label{eq:A-bound}
 \bigl(\mathbb{E}[A^2]\bigr)^{1/2}
 \le \sqrt{2\cE(h,h)}+\norm{Lh}_{2,\pi}.
\end{equation}

The stationary path law on \([0,1]\) is invariant under time reversal. Indeed, the density of a path with states \(x_0,\ldots,x_m\) and jump times \(0<t_1<\cdots<t_m<1\) is
\[
 \pi(x_0)\prod_{j=1}^m r(x_{j-1},x_j)
       \prod_{j=0}^m e^{-\lambda(x_j)(t_{j+1}-t_j)},
\]
where \(\lambda(x)=\sum_y r(x,y)\), \(t_0=0\) and \(t_{m+1}=1\). Detailed balance leaves this density unchanged when the state sequence and holding intervals are reversed. A forward jump \(x\to y\) contributes \(f(y)(h(x)-h(y))\) to \(A\) on the reversed path, the negative of its contribution to \(B\) on the forward path. Thus \(B\) has the same distribution as \(-A\),
\[
\|Q(f,h)\|_{L^2}
\le \|A\|_{L^2}+\|B\|_{L^2}
=2\|A\|_{L^2},
\]
and \eqref{eq:A-bound} yields \eqref{eq:mixed-L2}.

\end{proof}


\subsection{Small jumps of low-degree projections}

We need a fourth-moment estimate with explicit dependence on the bias. The following bound is a form of hypercontractivity on product spaces; see~\cite{ODonnell2014} for the general theory.

\begin{lemma}\label{lem:fourth-moment}
Let \(P\) be a Fourier polynomial of degree at most \(r\) on a product Bernoulli\((p)\) space, where \(r\ge0\) is an integer, and put \(v=p(1-p)\). Then
\begin{equation}\label{eq:fourth-moment}
 \norm{P}_4\le v^{-r/2}\norm{P}_2.
\end{equation}
\end{lemma}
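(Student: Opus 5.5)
We will prove \eqref{eq:fourth-moment} by the standard tensorization argument for the $(2,4)$ Bonami inequality, run coordinate by coordinate, with the single-coordinate hypercontractivity constant made explicit in terms of $v$. Write $\chi(x)=(x-p)/\sqrt v$ for the one-coordinate character, so that $\E\chi=0$, $\E\chi^2=1$, and
\[
\E\chi^4=\frac{(1-p)^4p+p^4(1-p)}{v^2}=\frac{(1-p)^3+p^3}{v}\le\frac1v .
\]
The induction will be on the number $n$ of coordinates, with the claim $\E P^4\le v^{-2r}(\E P^2)^2$ for every $P$ of degree at most $r$; the case $n=0$ is trivial because then $P$ is constant.

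For the inductive step, split along the last coordinate, $P=Q+\chi(x_n)R$. Here $Q$ has degree at most $r$ and $R$ has degree at most $r-1$, and both depend only on $x_1,\dots,x_{n-1}$. Expanding $\E P^4$ and using independence of $x_n$ gives
\[
\E P^4=\E Q^4+6\,\E Q^2R^2+4\,\E\chi^3\,\E QR^3+\E\chi^4\,\E R^4 ,
\]
since the term linear in $\chi$ vanishes. The cubic term is the one nuisance. We will bound $|\E\chi^3|\le(\E\chi^4)^{1/2}$ and use AM--GM in the form $|QR^3|\le \tfrac12\bigl(\sqrt v\,Q^2R^2+v^{-1/2}R^4\bigr)$. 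This reduces everything to
\[
\E P^4\le \E Q^4+8\,\E Q^2R^2+\frac{3}{v}\,\E R^4 .
\]
By Cauchy--Schwarz, $\E Q^2R^2\le(\E Q^4)^{1/2}(\E R^4)^{1/2}$. The induction hypothesis then yields $\E Q^4\le v^{-2r}(\E Q^2)^2$ and $\E R^4\le v^{-2(r-1)}(\E R^2)^2$, so that
\[
\E P^4\le v^{-2r}\Bigl[(\E Q^2)^2+8v\,\E Q^2\,\E R^2+3v^{2}(\E R^2)^2\Bigr].
\]
Since $v\le 1/4$, we have $8v\le2$ and $3v^2\le1$. Hence the bracket is at most $(\E Q^2+\E R^2)^2=(\E P^2)^2$, by orthogonality of $Q$ and $\chi R$. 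This closes the induction and gives $\norm P_4\le v^{-r/2}\norm P_2$.

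The main obstacle is the constant bookkeeping in the inductive step. The loss per degree must be absorbed by the single factor $v^{-2}$ in $\E P^4$, and the cubic term must not spoil it. The spare room comes from $v\le1/4$, which keeps the cross terms within the square $(\E Q^2+\E R^2)^2$. If the AM--GM split above turns out to be too tight, the fallback is the cruder bound $|\E\chi^3|\le v^{-1/2}$ (valid because $|\chi|\le v^{-1/2}$ and $\E\chi^2=1$), which gives the same structure with slightly different constants. There is slack to absorb such changes, since we only need the conclusion with $v^{-r/2}$, not a sharp constant.
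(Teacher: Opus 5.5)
Your argument is correct, but it is organized differently from the paper's proof.

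\textbf{Your route.} You run the direct Bonami-type induction on the polynomial itself. You split $P=Q+\chi(x_n)R$, control $\E Q^2R^2$ by Cauchy--Schwarz, and obtain the needed extra factor of $v$ from the fact that $R$ has degree at most $r-1$.

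\textbf{The paper's route.} The paper instead proves an operator statement. The one-coordinate map $T(a+b\chi)=a+\sqrt v\,b\chi$ satisfies $\E(a+\sqrt v\,b\chi)^4\le(a^2+b^2)^2$, which is a numerical inequality in two real numbers. The paper tensorizes this $L^2\to L^4$ contraction via Minkowski's inequality. It then recovers the degree bound by applying the contraction to the rescaled polynomial $\sum_{|S|\le r}v^{-|S|/2}a_S\chi_S$, whose image under $T^{\otimes n}$ is $P$. In that route the factor $v$ comes from the $\sqrt v$ built into $T$, not from a degree drop.

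\textbf{Comparison.} The one-coordinate bookkeeping is the same in both proofs. The cubic moment is absorbed by AM--GM, leaving cross coefficients $8v$ and $3v$, which $v\le1/4$ keeps inside the square. The paper's route gives the stronger fact that $T^{\otimes n}$ is a contraction from $L^2$ to $L^4$ on all functions, not only on low-degree polynomials. Your route is more self-contained and avoids both the Minkowski tensorization and the rescaling step.

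\textbf{A small slip.} You have
$\frac3v\,\E R^4\le\frac3v\,v^{-2(r-1)}(\E R^2)^2=3v\cdot v^{-2r}(\E R^2)^2$,
so the last coefficient in your bracket should be $3v$, not $3v^2$. Since $3v\le 3/4\le1$, the conclusion is unaffected.
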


\begin{proof}
For one coordinate, let \(\chi=(X-p)/\sqrt v\), where \(X\) has the Bernoulli\((p)\) distribution. Then
\[
 \E\chi=0,\quad \E\chi^2=1,\quad
 \E\chi^3=\frac{1-2p}{\sqrt v},\quad
 \E\chi^4=\frac{1-3v}{v}.
\]
Define \(T(a+b\chi)=a+\sqrt v\,b\chi\). For real \(a,b\),
\begin{align*}
 \E(a+\sqrt v\,b\chi)^4
 &=a^4+6va^2b^2+4v(1-2p)ab^3+v(1-3v)b^4\\
 &\le a^4+8va^2b^2+(3v-3v^2)b^4\\
 &\le(a^2+b^2)^2.
\end{align*}
Here we used \(4v|a||b|^3\le2va^2b^2+2vb^4\) and \(v\le1/4\). Thus \(T\) is an \(L^2\)-to-\(L^4\) contraction.

This contraction tensorizes. To see this, separate a coordinate \(x\) from the others \(y\), and let \(T_x,T_y\) be the corresponding operators. If the contraction holds for \(T_y\), the one-coordinate bound and Minkowski's inequality give
\begin{align*}
 \norm{T_xT_yF}_{L^4_{x,y}}
 &\le\left\|\bigl(\E_x|T_yF|^2\bigr)^{1/2}\right\|_{L^4_y}\\
 &\le\left(\E_x\norm{T_yF(x,\cdot)}_{L^4_y}^2\right)^{1/2}
 \le\norm{F}_{L^2_{x,y}}.
\end{align*}
Induction proves the contraction in every dimension. On the Fourier basis, the resulting operator multiplies \(\chi_S\) by \(v^{|S|/2}\). If \(P=\sum_{|S|\le r}a_S\chi_S\), apply the contraction to
\[
 R=\sum_{|S|\le r}v^{-|S|/2}a_S\chi_S.
\]
Since \(TR=P\) and \(\norm{R}_2\le v^{-r/2}\norm{P}_2\), we obtain \eqref{eq:fourth-moment}.
\end{proof}

Return to the product resampling chain, and define $M_{n,d}=\max_{0<s<1}|\Delta h_{n,d}(X_s^{(n)})|,$ with the maximum equal to zero when there are no configuration jumps.

\begin{lemma}\label{lem:small-jumps}
For every integer \(d\ge1\) and every \(\eta>0\),
\begin{equation}\label{eq:small-jumps}
 \PP(M_{n,d}>\eta)
 \le\frac{2}{\eta^4v_n^{2d-1}}
       \sum_{i=1}^n\bigl(J_{n,i}^{(d)}\bigr)^2.
\end{equation}
\end{lemma}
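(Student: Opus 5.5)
The plan is a union bound over jumps, with each jump's size controlled by a fourth moment. I would bound $\PP(M_{n,d}>\eta)$ by the expected number of configuration jumps in $(0,1)$ at which $|\Delta h_{n,d}|>\eta$. Since $\one_{\{|\Delta h|>\eta\}}\le \eta^{-4}|\Delta h|^4$, this gives
\[
\PP(M_{n,d}>\eta)\le \eta^{-4}\,\E\sum_{0<s<1}|\Delta h_{n,d}(X_s^{(n)})|^4 .
\]
The right-hand side is a jump sum $S_1^G$ with $G(x,y)=|h_{n,d}(y)-h_{n,d}(x)|^4$. By the first compensation identity \eqref{eq:compensation-first}, its expectation is $\sum_{x,y}\pi_n(x)r(x,y)G(x,y)$.

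The next step is to make this concrete for the product chain. A jump occurs when coordinate $i$ is resampled (rate one) and its value actually changes. Write
\[
D_ig(x)=g(x^{i\to1})-g(x^{i\to0}),
\]
which does not depend on $x_i$. The jump of $h_{n,d}$ is then $\pm D_ih_{n,d}(x)$. Under stationarity, the probability that the old and new values of coordinate $i$ differ is $2v_n$, independently of the other coordinates. Hence
\[
\E\sum_{0<s<1}|\Delta h_{n,d}|^4=\sum_{i=1}^n 2v_n\,\E_{\pi_n}|D_ih_{n,d}|^4 .
\]

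Then I would compute $D_ih_{n,d}$ in the Fourier basis. Since $D_i\chi_{n,S}=v_n^{-1/2}\chi_{n,S\setminus\{i\}}$ for $i\in S$ and $D_i\chi_{n,S}=0$ otherwise, we have
\[
D_ih_{n,d}=v_n^{-1/2}\sum_{S\ni i,\,1\le|S|\le d}\widehat f_n(S)\chi_{n,S\setminus\{i\}} .
\]
This is a polynomial of degree at most $d-1$ with $\norm{D_ih_{n,d}}_2^2=J_{n,i}^{(d)}/v_n$. Applying Lemma~\ref{lem:fourth-moment} with $r=d-1$ gives
\[
\norm{D_ih_{n,d}}_4^4\le v_n^{-2(d-1)}\bigl(J_{n,i}^{(d)}/v_n\bigr)^2=v_n^{-2d}\bigl(J_{n,i}^{(d)}\bigr)^2 .
\]
Multiplying by $2v_n$, summing over $i$, and dividing by $\eta^4$ yields exactly \eqref{eq:small-jumps}.

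The only delicate point is bookkeeping: the factor $2v_n$ for the rate of effective flips must be tracked correctly, and the degree must drop from $d$ to $d-1$ under $D_i$, since that drop is what produces the exponent $2d-1$. I expect no real obstacle beyond this.
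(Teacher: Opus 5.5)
Your proposal is correct and follows the paper's proof: you apply Markov's inequality to the sum of fourth powers of the jumps, and you use the compensation identity with effective flip rate $2v_n$, which is justified because $D_i h_{n,d}$ does not depend on $x_i$. You then write $D_i h_{n,d}$ as a polynomial of degree at most $d-1$ with $\norm{D_ih_{n,d}}_2^2=J_{n,i}^{(d)}/v_n$ and apply Lemma~\ref{lem:fourth-moment} with $r=d-1$, and your exponent bookkeeping $2v_n\cdot v_n^{-2d}=2v_n^{-(2d-1)}$ matches the paper's.
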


\begin{proof}
Define \(D_i h(x)=h(x^{i\to1})-h(x^{i\to0})\), which is independent of \(x_i\). For the projection in \eqref{eq:projection},
\begin{equation}\label{eq:Di-hnd}
 D_i h_{n,d}
 =\frac1{\sqrt{v_n}}
 \sum_{\substack{S\ni i\\1\le |S|\le d}}
 \widehat f_n(S)\chi_{n,S\setminus\{i\}}.
\end{equation}
Hence \(\E_{\pi_n}|D_i h_{n,d}|^2=J_{n,i}^{(d)}/v_n\). \eqref{eq:Di-hnd} has degree at most \(d-1\), so Lemma~\ref{lem:fourth-moment} gives
\begin{equation}\label{eq:derivative-fourth}
 \E_{\pi_n}|D_i h_{n,d}|^4
 \le v_n^{-2d}\bigl(J_{n,i}^{(d)}\bigr)^2.
\end{equation}
By \eqref{eq:compensation-first},
\begin{align*}
 \E\sum_{0<s<1}|\Delta h_{n,d}(X_s^{(n)})|^4
 &=\sum_x\pi_n(x)\sum_{i=1}^n
             [p_n(1-x_i)+(1-p_n)]x_i|D_i h_{n,d}(x)|^4\\
 &=2v_n\sum_{i=1}^n\E_{\pi_n}|D_i h_{n,d}|^4\\
 &\le2v_n^{-(2d-1)}\sum_{i=1}^n\bigl(J_{n,i}^{(d)}\bigr)^2.
\end{align*}
where $x^i=x^{i\to 1-x_i}$. For the second equality, condition on all coordinates other than \(i\): the derivative is fixed and the mean jump rate is \((1-p_n)p_n+p_n(1-p_n)=2v_n\). Since \(M_{n,d}^4\) is at most the sum of fourth powers of all jumps, Markov's inequality proves \eqref{eq:small-jumps}.
\end{proof}

\section{Proof of the main results}
\label{sec:proof}

We first state the following spectral criterion.

\begin{theorem}\label{thm:criterion}
Let \((f_n)\) be a sequence of Boolean functions. Suppose that, for every fixed integer \(d\ge1\),
\begin{equation}\label{eq:criterion}
 v_n^{-(2d-1)}\sum_{i=1}^n\bigl(J_{n,i}^{(d)}\bigr)^2
 \longrightarrow0.
\end{equation}
If \((f_n)\) is tame, then \(\Var_{\pi_n}(f_n)\to0\).
\end{theorem}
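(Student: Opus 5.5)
Fix an integer \(d\ge1\). The plan is to show first that the Fourier mass in degrees \(1,\dots,d\) vanishes, and then to combine this with Lemma~\ref{lem:spectral-tail}. Set \(Q_{n,d}=Q(f_n,h_{n,d})\) as in Lemma~\ref{lem:mixed-jumps}; this applies because \(f_n\) takes values in \(\{0,1\}\), so \(\norm{f_n}_\infty\le1\). By \eqref{eq:mixed-L2}, together with \eqref{eq:projection-energy} and \eqref{eq:projection-generator},
\[
 \bigl(\E Q_{n,d}^2\bigr)^{1/2}\le 2\bigl(\sqrt{2d}+d\bigr),
\]
uniformly in \(n\). Hence, for fixed \(d\), the family \((Q_{n,d})_n\) is uniformly integrable.

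Next I will show that \(Q_{n,d}\to0\) in probability. Since \(f_n\) is Boolean, every jump of \(f_n(X_s^{(n)})\) has absolute value at most \(1\), and \(\Delta f_n(X_s^{(n)})\ne0\) exactly at the output changes counted by \(C_n\). This gives the pathwise bound
\[
|Q_{n,d}|\le C_n M_{n,d}.
\]
Lemma~\ref{lem:small-jumps} combined with hypothesis \eqref{eq:criterion} gives \(M_{n,d}\to0\) in probability. Tightness of \((C_n)\) then yields \(C_nM_{n,d}\to0\) in probability: for any \(\epsilon,\eta>0\), pick \(K\) with \(\sup_n\PP(C_n>K)<\epsilon\), and use the bound \(\PP(|Q_{n,d}|>\eta)\le\epsilon+\PP(M_{n,d}>\eta/K)\). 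Convergence in probability together with uniform integrability gives \(\E Q_{n,d}\to0\).

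By \eqref{eq:mixed-mean} and \eqref{eq:projection-energy},
\[
\E Q_{n,d}=2\cE_n(f_n,h_{n,d})=2\sum_{k=1}^d kW_{n,k}\ge 2\sum_{k=1}^dW_{n,k}.
\]
So \(\sum_{k\le d}W_{n,k}\to0\) for every fixed \(d\). By \eqref{eq:variance-spectrum},
\[
\limsup_n\Var_{\pi_n}(f_n)\le\limsup_n\sum_{k>d}W_{n,k}
\]
for every \(d\). Letting \(d\to\infty\) and applying Lemma~\ref{lem:spectral-tail} gives \(\Var_{\pi_n}(f_n)\to0\).

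The only delicate step is passing from convergence in probability of \(Q_{n,d}\) to convergence of its mean. Tameness controls \(C_n\) only in distribution, not in expectation, so the uniform second-moment bound from Lemma~\ref{lem:mixed-jumps} is essential here. The rest is assembly of results already established.
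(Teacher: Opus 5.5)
Your proposal is correct and follows essentially the same route as the paper: the uniform $L^2$ bound on $Q_{n,d}$ from Lemma~\ref{lem:mixed-jumps}, and the pathwise bound $|Q_{n,d}|\le C_nM_{n,d}$ combined with Lemma~\ref{lem:small-jumps} and tightness to get $Q_{n,d}\to0$ in probability. These give $\E Q_{n,d}=2\sum_{k\le d}kW_{n,k}\to0$, and Lemma~\ref{lem:spectral-tail} closes the argument. The only cosmetic difference is that you invoke uniform integrability, where the paper writes out the explicit estimate $\E|Q_{n,d}|\le\delta+2(\sqrt{2d}+d)\,\PP(|Q_{n,d}|>\delta)^{1/2}$.
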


\begin{proof}
Assume tameness, fix \(d\ge1\), and recall $Q_{n,d}$. Lemma~\ref{lem:mixed-jumps} and \eqref{eq:projection-energy}--\eqref{eq:projection-generator} yield
\begin{equation}\label{eq:Q-bounds}
 \E Q_{n,d}=2\sum_{k=1}^d kW_{n,k},
\quad \text{and} \quad 
\bigl(\mathbb E Q_{n,d}^2\bigr)^{1/2}
\le 2(\sqrt{2d}+d).
\end{equation}
As \(f_n\) is Boolean, every nonzero output jump has absolute value one, and therefore
\begin{equation}\label{eq:pathwise-product}
 |Q_{n,d}|\le C_nM_{n,d}.
\end{equation}
By \eqref{eq:criterion} and Lemma~\ref{lem:small-jumps}, \(M_{n,d}\to0\) in probability. For any \(\delta>0\) and integer \(K\ge1\),
\[
 \PP(|Q_{n,d}|>\delta)
 \le\PP(C_n>K)+\PP(M_{n,d}>\delta/K).
\]
Taking \(\limsup_n\) and then letting \(K\to\infty\) shows that \(Q_{n,d}\to0\) in probability. The uniform \(L^2\) bound in \eqref{eq:Q-bounds} implies convergence in \(L^1\): for every \(\delta>0\),
\[
 \limsup_{n\to\infty}\E|Q_{n,d}|
 \le\delta+(2(\sqrt{2d}+d))\limsup_{n\to\infty}\PP(|Q_{n,d}|>\delta)^{1/2}
 =\delta.
\]
Letting \(\delta\downarrow0\) and using \eqref{eq:Q-bounds}, we obtain
\begin{equation}\label{eq:low-degree-vanishes}
 \sum_{k=1}^d W_{n,k}\le\sum_{k=1}^d kW_{n,k}\longrightarrow0
 \qquad\text{for every fixed }d.
\end{equation}
Finally, by Lemma~\ref{lem:spectral-tail} and \eqref{eq:variance-spectrum}, as $d\to\infty$
\[
 \limsup_{n\to\infty}\Var_{\pi_n}(f_n)
 \le\limsup_{n\to\infty}\sum_{k>d}W_{n,k}\longrightarrow0.
\]
\end{proof}

\begin{proof}[Proof of Theorem~\ref{thm:main}]
Fix \(d\ge1\). If \(f_n(\sigma x)=f_n(x)\), invariance of \(\pi_n\) under coordinate permutations gives \(\widehat f_n(\sigma(S))=\widehat f_n(S)\). The map \(S\mapsto\sigma(S)\) preserves cardinality and sends sets containing \(i\) to sets containing \(\sigma(i)\). Transitivity therefore makes \(J_{n,i}^{(d)}\) independent of \(i\). By \eqref{eq:sum-J},
\begin{equation}\label{eq:transitive-J}
 \sum_{i=1}^n\bigl(J_{n,i}^{(d)}\bigr)^2
 =\frac1n\left(\sum_{i=1}^n J_{n,i}^{(d)}\right)^2
 \le\frac{d^2}{n}.
\end{equation}
Since \(v_n=b_n(1-b_n)\ge b_n/2\),
\[
 v_n^{-(2d-1)}\sum_{i=1}^n\bigl(J_{n,i}^{(d)}\bigr)^2
 \le\frac{2^{2d-1}d^2}{nb_n^{2d-1}}\longrightarrow0
\]
by \eqref{eq:bias-condition}. Theorem~\ref{thm:criterion} now gives \(\Var_{\pi_n}(f_n)\to0\) under tameness. This contradicts non-degeneracy and proves the final assertion.
\end{proof}

\begin{proof}[Proof of Corollary~\ref{cor:regular}]
Conditioning on the coordinates other than \(i\) shows that
\begin{equation}\label{eq:influence-Fourier}
 I_{n,i}(f_n)=2v_n\E_{\pi_n}(D_i f_n)^2
            =2\sum_{S\ni i}\widehat f_n(S)^2.
\end{equation}
This is the resampling normalization used in~\cite{Forsstrom2020,Forsstrom2021}. Consequently,
\begin{equation}\label{eq:total-influence}
 I_n(f_n)=2\sum_{k\ge1}kW_{n,k}.
\end{equation}
Assume tameness. Since \(W_{n,k}=0\) for \(k>n\), for every fixed \(d\),
\[
 \frac{I_n(f_n)}n
 \le\frac{2d}{n}+2\sum_{k>d}W_{n,k}.
\]
Lemma~\ref{lem:spectral-tail}, with \(n\to\infty\) before \(d\to\infty\), yields \(I_n(f_n)/n\to0\). Regularity and \eqref{eq:influence-Fourier} give
\[
 \max_i J_{n,i}^{(d)}\le\frac{I_n(f_n)}{2n}.
\]
Together with \eqref{eq:sum-J}, 
\[
 \sum_{i=1}^n\bigl(J_{n,i}^{(d)}\bigr)^2
 \le\left(\max_i J_{n,i}^{(d)}\right)\sum_i J_{n,i}^{(d)}
 \le\frac{dI_n(f_n)}{2n}\longrightarrow0.
\]
The biases are bounded away from zero and one, so \(v_n\) is bounded below by a positive constant. Thus \eqref{eq:criterion} holds for every fixed \(d\), and Theorem~\ref{thm:criterion} proves the corollary.
\end{proof}

\subsection{A quantitative tail bound}

We can also provide a finite dimensional lower bound on the tail of \(C_n\) when a fixed range of Fourier degrees carries positive mass.

\begin{proposition}\label{prop:quantitative}
Let \(f_n\) be transitive, and suppose that \(\sum_{k=1}^d W_{n,k}\ge w>0\) for some integer \(d\ge1\). Then, for every integer \(K\ge1\),
\begin{equation}\label{eq:quantitative-tail}
 \PP(C_n>K)
 \ge\frac{w^2}{(2(\sqrt{2d}+d))^2}
     -\frac{2d^2K^4}{w^4nv_n^{2d-1}}.
\end{equation}
In particular, if the spectral mass bound holds eventually along a sequence, for fixed \(d,w\), and \(nv_n^{2d-1}\to\infty\), then for every fixed $K\ge1$,
\[
 \liminf_{n\to\infty}\PP(C_n>K)\ge\frac{w^2}{(2(\sqrt{2d}+d))^2}>0.
\]
\end{proposition}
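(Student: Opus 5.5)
Fix \(n\) with \(f_n\) transitive and \(\sum_{k=1}^dW_{n,k}\ge w\), and fix an integer \(K\ge1\). I will quantify the argument of Theorem~\ref{thm:criterion}, using a Paley--Zygmund type lower bound for \(Q_{n,d}\) against the pathwise bound \eqref{eq:pathwise-product}.

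\emph{Lower bound.} By \eqref{eq:Q-bounds},
\[
 \E Q_{n,d}=2\sum_{k=1}^dkW_{n,k}\ge 2w,
 \qquad
 \bigl(\E Q_{n,d}^2\bigr)^{1/2}\le c_d:=2(\sqrt{2d}+d).
\]
I use a threshold \(\theta=w\), which is half the mean. Splitting the mean on the event \(\{|Q_{n,d}|>w\}\) and using Cauchy--Schwarz gives
\[
 2w\le\E Q_{n,d}\le w+\E\bigl[|Q_{n,d}|\one_{\{|Q_{n,d}|>w\}}\bigr]
 \le w+c_d\,\PP(|Q_{n,d}|>w)^{1/2}.
\]
Hence \(\PP(|Q_{n,d}|>w)\ge w^2/c_d^2\), which is exactly the first term in \eqref{eq:quantitative-tail}.

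\emph{Upper bound.} By \eqref{eq:pathwise-product}, the event \(\{|Q_{n,d}|>w\}\) is contained in \(\{C_n>K\}\cup\{M_{n,d}>w/K\}\). For the second event, Lemma~\ref{lem:small-jumps} with \(\eta=w/K\) combines with the transitivity bound \eqref{eq:transitive-J}, \(\sum_i(J_{n,i}^{(d)})^2\le d^2/n\), to give
\[
 \PP(M_{n,d}>w/K)\le\frac{2K^4}{w^4v_n^{2d-1}}\cdot\frac{d^2}{n},
\]
which is the subtracted term. Rearranging
\[
 \PP(|Q_{n,d}|>w)\le\PP(C_n>K)+\PP(M_{n,d}>w/K)
\]
yields \eqref{eq:quantitative-tail}.

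\emph{The ``in particular'' statement.} If the spectral mass bound holds eventually and \(nv_n^{2d-1}\to\infty\), the subtracted term tends to zero for each fixed \(K\). Taking \(\liminf\) in \eqref{eq:quantitative-tail} then gives the stated bound.

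The only delicate step is the choice of threshold, which must make the constant come out exactly as \(w^2/c_d^2\) and match the \(w^4\) in the error term. Taking \(\theta=w\) works cleanly because the mean is at least \(2w\).
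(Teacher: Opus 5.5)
Your proposal is correct and is essentially the paper's proof. The paper splits \(\E|Q_{n,d}|\) directly over the event \(\{C_n>K\}\cup\{M_{n,d}>w/K\}\) and its complement rather than first over \(\{|Q_{n,d}|>w\}\), but via \(|Q_{n,d}|\le C_nM_{n,d}\) this is the same argument, with the same Cauchy--Schwarz step and the same use of Lemma~\ref{lem:small-jumps} and \eqref{eq:transitive-J}.
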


\begin{proof}
Let \(Q_{n,d}\) be as in before. By \eqref{eq:Q-bounds}, \(\E Q_{n,d}\ge2w\) and \(\norm{Q_{n,d}}_2\le 2(\sqrt{2d}+d)\). Define the event
\[
\mathcal E_{n,d,K}
:=
\{C_n>K\}\cup\{M_{n,d}>w/K\}.
\]

On $\mathcal E_{n,d,K}^{\,c}$, \eqref{eq:pathwise-product} gives \(|Q_{n,d}|\le w\). Splitting the expectation over \(\mathcal E_{n,d,K}\) and \(\mathcal E_{n,d,K}^{\,c}\), and applying Cauchy--Schwarz, yields
\[
 2w\le\E|Q_{n,d}|
 \le w+(2(\sqrt{2d}+d))\bigl(\PP(C_n>K)+\PP(M_{n,d}>w/K)\bigr)^{1/2}.
\]
It follows that \(\PP(C_n>K)\ge w^2/(2(\sqrt{2d}+d))^2-\PP(M_{n,d}>w/K)\). Lemma~\ref{lem:small-jumps} and \eqref{eq:transitive-J} bound the last probability by \(2d^2K^4/(w^4nv_n^{2d-1})\), proving \eqref{eq:quantitative-tail}. The final assertion follows by taking \(\liminf_n\) for fixed \(K\).
\end{proof}

\section*{Acknowledgements}
This work was supported by the Shandong Provincial Natural Science
Foundation for Outstanding Youth Scholar (Grant No.~ZR2024JQ030).

\end{document}